\documentclass[12pt]{amsart}
\usepackage{amsmath,amssymb,color,cite,verbatim}
\usepackage{etoolbox}
\apptocmd{\sloppy}{\hbadness 10000\relax}{}{}

\numberwithin{equation}{section}

\newtheorem{thm}[equation]{Theorem}

\newtheorem{lemma}[equation]{Lemma}
\newtheorem{cor}[equation]{Corollary}

\theoremstyle{definition}
\newtheorem{rmk}[equation]{Remark}

\newcommand{\F}{\mathbb{F}}
\newcommand{\bP}{\mathbb{P}}
\newcommand{\C}{\mathbb{C}}

\newcommand{\g}{\mathfrak g}

\DeclareMathOperator{\charp}{char}
\DeclareMathOperator{\lcm}{lcm}

\newcommand{\mybar}[1]{#1\llap{$\overline{\phantom{\rm#1}}$}}
\newcommand{\abs}[1]{\lvert #1 \rvert}

\usepackage[colorlinks,pagebackref,pdftex, bookmarks=false]{hyperref}

\begin{document}

\title{The relative Riemann--Hurwitz formula}

\author{Zhiguo Ding}
\address{
  Hunan Institute of Traffic Engineering,
  Heng\-yang, Hunan 421001 China
}
\email{ding8191@qq.com}

\author{Michael E. Zieve}
\address{
  Department of Mathematics,
  University of Michigan,
  530 Church Street,
  Ann Arbor, MI 48109-1043 USA
}
\email{zieve@umich.edu}
\urladdr{http://www.math.lsa.umich.edu/$\sim$zieve/}

\date{\today}

\begin{abstract}
For any nonconstant $f, g \in \C(x)$ such that the numerator $H(x,y)$ of $f(x) - g(y)$ 
is irreducible, we compute the genus of the normalization of the curve $H(x,y) = 0$. 
We also prove an analogous formula in arbitrary characteristic when $f$ and $g$ have 
no common wildly ramified branch points, and generalize to (possibly reducible) fiber 
products of nonconstant morphisms of curves $f \colon A \to D$ and $g \colon B \to D$.
\end{abstract}

\thanks{
The second author thanks the National Science Foundation for support under grant DMS-1601844.}

\maketitle


\section{Introduction} 

Mathematicians studying several different topics have been led to investigate the genus of algebraic 
curves of the form $f(x)=g(y)$. For instance, such considerations arise in algebraic topology 
\cite{EKS}, complex dynamics \cite{Ye}, value distribution of meromorphic functions \cite{B,KA,KAL,Wang}, 
Diophantine equations \cite{AZ1,AZ0,BST,BT,DLS}, arithmetic dynamics \cite{CJS,CHZ}, Pell equations 
in polynomials \cite{AZ1,Ng}, functional equations in rational functions \cite{DW,Ritt,Rittrat,Z}, and coding 
theory \cite{Rod}. This has led many authors to compute the genus of some specific instance of such 
curves.

Besides specific instances, mathematicians have produced two general types of formulas for this genus.  
Ritt \cite{Rittrat} essentially determined the genus of irreducible curves of the form $f(x)=g(y)$ when $f(x)$ and $g(x)$ 
are nonconstant complex rational functions. His method uses the Riemann--Hurwitz formula for the projection 
of this curve onto the $x$-axis, and relies on computing the ramification in this projection. By 
examining singularities and using the Pl\"ucker formula, Kang \cite{Kang} proved a genus formula for irreducible curves of the form $f(x)=g(y)$ where $f(x)$ and $g(x)$ are nonconstant polynomials
over an algebraically closed field of characteristic zero. The genus formulas 
obtained by Ritt and Kang have different forms, and are not obviously equivalent. 

In this paper we prove new results generalizing these known formulas in several ways:
\begin{itemize}
\item We do not assume that $f(x)=g(y)$ is irreducible, but instead obtain a sum over its irreducible 
components.
\item Our formulas are valid in positive characteristic, under some hypotheses about wild ramification.
\item We do not require $f$ and $g$ to be polynomials (as did Kang) or rational functions (as did Ritt), but instead allow them to be arbitrary curve morphisms with a common target.
\end{itemize}

Our approach uses ramification, and in particular the computation of points in fibered products from 
\cite{DZ}, but it differs from Ritt's approach in several ways. In the end we obtain two formulas: 
one generalizing Ritt's which is valid when at least one of $f$ and $g$ is tamely ramified, and one generalizing 
Kang's in the more general situation where $f$ and $g$ do not have a common wildly ramified branch point.

Our main result uses the following standard notation and terminology, where $f \colon A \to D$ is a nonconstant morphism between smooth projective irreducible curves over an algebraically closed field $k$ of characteristic $p\ge 0$:
\begin{itemize}
\item $A(k)$ is the set of points on $A$ with coordinates in $k$
\item for any $P\in A(k)$, we write $e_f(P)$ for the ramification index of $P$ under $f$
\item we say that $P\in A(k)$ is tamely ramified (resp., wildly ramified) under $f$ if $p\nmid e_f(P)$ (resp., $p\mid e_f(P)$), and that $f$ is tamely ramified if every point in $A(k)$ is tamely ramified under $f$; in particular, if $p=0$ then $f$ is tamely ramified
\item a branch point of $f$ is a point in $D(k)$ of the form $f(P)$ where $P\in A(k)$ satisfies $e_f(P)>1$
\item  a branch point of $f$ is wildly ramified if it has a wildly ramified $f$-preimage
\end{itemize}

\begin{thm}\label{main}
Let $f \colon A \to D$ and 
$g \colon B \to D$ be nonconstant morphisms of smooth projective irreducible curves over an algebraically closed field $k$. Write $m$ 
and $n$ for the degrees of $f$ and $g$, and write $\g_A,\g_B,\g_D$ for the genera of $A,B,D$. Let 
$C_1,\dots,C_r$ be the normalizations of the irreducible components of the fiber product of $f$ and 
$g$, and write $\g_i$ for the genus of $C_i$.
\renewcommand{\theenumi}{\ref{main}.\arabic{enumi}}
\renewcommand{\labelenumi}{(\thethm.\arabic{enumi})}
\begin{enumerate}
\item \label{maintame} If $f$ is tamely ramified then
\[
\sum_{i=1}^r (2\g_i-2) = m(2\g_B-2) + 
\sum_{R\in D(k)}\,\sum_{\substack{P\in f^{-1}(R) \\ Q\in g^{-1}(R)}}\bigl(e_f(P)-\gcd\bigl(e_f(P),e_g(Q)\bigr)\bigr).
\]
\item \label{mainwild} If $f$ and $g$ have no common wildly ramified branch points then
\begin{align*}
& \sum_{i=1}^r (2\g_i-2) = m(2\g_B-2) + n(2\g_A-2) - mn(2\g_D-2) \,-\\
& \quad\,\,\,\,\! \sum_{R\in D(k)}\,\sum_{\substack{P\in f^{-1}(R) \\ Q\in g^{-1}(R)}}\bigl((e_f(P)-1) \cdot (e_g(Q)-1)+\gcd\bigl(e_f(P),e_g(Q)\bigr)-1\bigr).
\end{align*}
\end{enumerate}
\end{thm}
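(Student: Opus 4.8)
The plan is to apply the Riemann--Hurwitz formula to suitable projections from the $C_i$ and to reduce everything to a purely local computation at the points of the fiber product lying over a pair $(P,Q)$ with $f(P)=g(Q)=R$. Write $X=A\times_D B$, let $\nu\colon\bigsqcup_i C_i\to X$ be the normalization, and denote by $\alpha\colon C_i\to A$ and $\beta\colon C_i\to B$ the two projections and by $h=f\circ\alpha=g\circ\beta\colon C_i\to D$ their common composite. We may assume $f,g$ are separable (automatic in (1), since tame ramification forces separability); then one projection of $X$ is generically \'etale, so $X$ is reduced, every component maps finitely onto each of $A,B$, and $\sum_i\deg\beta=m$ while $\sum_i\deg\alpha=n$. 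The crucial local input, taken from \cite{DZ}, is the structure of $X$ over $(P,Q)$: writing $a=e_f(P)$, $b=e_g(Q)$ and $d=\gcd(a,b)$, there are exactly $d$ branches, and at each of them $\alpha$ has ramification index $b/d$, $\beta$ has ramification index $a/d$, and $h$ has ramification index $\lcm(a,b)$. The reason is that as soon as one of the two maps is tame at the relevant point---say $p\nmid a$---one may choose a local coordinate with $s=u^a$ exactly, so the completed local ring of $X$ is $k[[v]][u]/(u^a-\psi(v))$; extracting an $a$-th root of the unit part of $\psi$ (possible since $p\nmid a$ and $k$ is algebraically closed) turns this into the monomial ring $k[[v]][u]/(u^a-v^b)$, whose branches and projection ramification are exactly as stated.

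For part (1), apply Riemann--Hurwitz to $\beta\colon C_i\to B$ and sum over $i$, using $\sum_i\deg\beta=m$:
\[
\sum_{i=1}^r(2\g_i-2)=m(2\g_B-2)+\sum_i\deg\mathfrak D_{\beta,i},
\]
where $\mathfrak D_{\beta,i}$ denotes the different of $\beta$ on $C_i$. Since $f$ is tamely ramified, at every branch the index $a/d=e_f(P)/d$ of $\beta$ is prime to $p$, so $\beta$ is tame and each branch contributes $a/d-1$ to the different. Over $(P,Q)$ there are $d$ branches, for a total of $d(a/d-1)=a-d=e_f(P)-\gcd\bigl(e_f(P),e_g(Q)\bigr)$; summing over all $(P,Q)$ and all $R$ gives \ref{maintame}.

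For part (2) I instead apply Riemann--Hurwitz to $h\colon C_i\to D$ and sum, obtaining $\sum_i(2\g_i-2)=mn(2\g_D-2)+\sum_i\deg\mathfrak D_{h,i}$. The heart of the argument is the local identity that the total different of $h$ over a pair $(P,Q)$ equals
\[
a\,d_g(Q)+b\,d_f(P)-(a-1)(b-1)-\gcd(a,b)+1,
\]
where $d_f(P),d_g(Q)$ are the different exponents of $f,g$ and $\deg\mathfrak D_f=\sum_{R}\sum_P d_f(P)$, $\deg\mathfrak D_g=\sum_R\sum_Q d_g(Q)$. To prove it, use transitivity of the different along whichever of the towers $C_i\to A\to D$ or $C_i\to B\to D$ has a tame first step: the no-common-wild hypothesis guarantees that at each $R$ at least one of $f,g$ is tame, hence at least one of $\alpha,\beta$ is tame at every branch. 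If $\alpha$ is tame one gets $d_h=d_\alpha+e_\alpha\,d_f=(b/d-1)+(b/d)\,d_f(P)$ per branch, and multiplying by the $d$ branches yields the displayed expression after a short calculation; the case where $\beta$ is tame is symmetric, and when both are tame (so $d_f=a-1$, $d_g=b-1$) the expression collapses to $ab-\gcd(a,b)$. Summing the local identity over all $(P,Q,R)$, the terms $\sum a\,d_g(Q)$ and $\sum b\,d_f(P)$ become $m\deg\mathfrak D_g$ and $n\deg\mathfrak D_f$ because $\sum_P a=m$ and $\sum_Q b=n$; feeding these back through Riemann--Hurwitz for $f$ and $g$ replaces them by $m(2\g_B-2)-mn(2\g_D-2)$ and $n(2\g_A-2)-mn(2\g_D-2)$, and collecting terms gives exactly \ref{mainwild}.

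The main obstacle is the local analysis at a pair $(P,Q)$ where one of the maps is wildly ramified, for there the wild map has no monomial local form and the picture $u^a=v^b$ is not literally correct. The point that makes everything work is that, under each theorem's hypotheses, the \emph{other} map is tame at the relevant point, and its tameness both straightens $X$ into the monomial model (yielding the branch count and projection indices) and, via transitivity of the different, confines all wild information to the single term $e_\alpha\,d_f(P)$ (or $e_\beta\,d_g(Q)$). Summed over the $d$ branches this wild contribution is exactly $b\,d_f(P)$ (or $a\,d_g(Q)$), which is precisely what Riemann--Hurwitz for $f$ (or $g$) contributes with the opposite sign---so the unknown wild differents cancel, leaving only the tame, combinatorial $\gcd$-terms. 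Engineering this cancellation, rather than attempting to evaluate the wild differents of $\alpha$ or $\beta$ directly, is what the whole proof of \ref{mainwild} turns on.
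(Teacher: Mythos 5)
Your overall strategy coincides with the paper's. Part (1) is essentially the paper's proof verbatim in different notation: Riemann--Hurwitz for the projections $\psi_i\colon C_i\to B$ (the paper's \eqref{321}), the branch count $\gcd(e_f(P),e_g(Q))$ over each pair $(P,Q)$ from Lemma~\ref{df}, Abhyankar's lemma for $e_{\psi_i}(S)=e_f(P)/\gcd(e_f(P),e_g(Q))$, and Dedekind's theorem to convert that into a different exponent. Part (2) is also the paper's argument in substance: the paper massages its combined formula into \eqref{323}, whose local summand $d_{f\circ\phi_i}(S)-d_{\phi_i}(S)-d_{\psi_i}(S)$ is symmetric in $f$ and $g$ and can be evaluated through whichever tower has a tame first step; you instead apply Riemann--Hurwitz directly to $h=f\circ\alpha=g\circ\beta$ and exploit the same symmetry via transitivity of the different, and your substitution of $m\deg\mathfrak{D}_g$ and $n\deg\mathfrak{D}_f$ back through Riemann--Hurwitz for $g$ and $f$ is an equivalent rearrangement of \eqref{322}--\eqref{323}. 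Your local verifications are correct.

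There is, however, one genuine gap: the opening claim that in part (2) ``we may assume $f,g$ are separable.'' The hypothesis of \eqref{mainwild} does not force this. If $f$ is inseparable then $p\mid e_f(P)$ for every $P$, so every point of $D$ is a wildly ramified branch point of $f$; the hypothesis then forces $g$ to be tamely ramified but does not exclude inseparable $f$ (e.g.\ $f=x^p$ with $g$ tame, for which the theorem still asserts a formula). In that case $h=f\circ\alpha$ is inseparable, so Riemann--Hurwitz for $h$, the different $\mathfrak{D}_h$, the exponent $d_f(P)$, and the transitivity formula $d_h=e_\alpha d_f+d_\alpha$ are all unavailable, and your part (2) argument cannot start. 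The paper sidesteps this by first proving \eqref{mainwild} when $f$ is tamely ramified using the one-sided identity \eqref{322}, which needs only $f$ separable, deducing the case where $g$ is tamely ramified from the symmetry of the final formula, and only then treating the case where neither is tame, where both maps are automatically separable. Your proof is patched the same way: handle ``one of $f,g$ inseparable, hence the other tamely ramified'' by a one-sided argument in the spirit of your part (1), and reserve the $h$-based computation for the case where both are separable. Two smaller points to tighten: the monomial model $u^a=v^b$ should be justified as a computation in the completed local ring $k[[u,v]]/(u^a-\psi(v))$ (the paper simply cites Lemma~\ref{df} and Lemma~\ref{abh} for the branch count and indices); and in your case division for the local identity, what you actually need is that $Q$ is tame under $g$ (which gives both $p\nmid b/d$ and $d_g(Q)=b-1$), not merely that $\alpha$ is tame at the branch --- the displayed expression uses $d_g(Q)=b-1$, which tameness of $\alpha$ alone would not supply.
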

\renewcommand{\theenumi}{\arabic{enumi}}
\renewcommand{\labelenumi}{(\arabic{enumi})}

\begin{rmk}
In case $g$ is the identity map on $D$, the formula in \eqref{maintame} is the classical Riemann--Hurwitz 
formula for tamely ramified morphisms. For more general $g$, the formulas in \eqref{maintame} and \eqref{mainwild}
may be viewed as ``relative'' Riemann--Hurwitz formulas.
\end{rmk}

\begin{rmk}
The summations in the above result only have finitely many nonzero summands, since the summation in the right side of \eqref{maintame} is zero whenever $e_f(P)=1$, and the summation in the right side of \eqref{mainwild} is zero when either $e_f(P)=1$ or $e_g(Q)=1$.
\end{rmk}

In order to make it easy to apply Theorem~\ref{main}, we now state some special cases of this result. We 
begin with the case of rational functions.

\begin{cor}\label{introrat}
Let $k$ be an algebraically closed field of characteristic $p\ge 0$, and let $f,g\in k(x)\setminus k$ have 
degrees $m$ and $n$, respectively. Let $C_1,\dots,C_r$ be the normalizations of the irreducible components 
of $f(x)=g(y)$, and write $\g_i$ for the genus of $C_i$.
\renewcommand{\theenumi}{\ref{introrat}.\arabic{enumi}}
\renewcommand{\labelenumi}{(\thethm.\arabic{enumi})}
\begin{enumerate}
\item \label{rattame} Suppose that $p\nmid e_f(P)$ for each $P\in\bP^1(k)$.
 Let $\Gamma$ be the 
set of pairs $(P,Q) \in \bP^1(k) \times \bP^1(k)$ such that $f(P) = g(Q)$ and $e_f(P)>1$. Then $\Gamma$ 
is finite, say $\Gamma = \{ (P_j,Q_j) \colon 1 \le j \le s \}$ with $s = \abs{\Gamma}$;
writing $m_j := e_f(P_j)$ and $n_j := e_g(Q_j)$, we have 
\[
\sum_{i=1}^r (2\g_i-2) = -2m + \sum_{j=1}^s\Bigl(m_j - \gcd(m_j,n_j)\Bigr).
\]
\item \label{ratwild} Let $\Lambda$ be the set of pairs $(P,Q) \in \bP^1(k) \times \bP^1(k)$ such that 
$f(P) = g(Q)$ and $e_f(P)>1$ and $e_g(Q)>1$. Suppose $\Lambda$ does not contain a pair of integers divisible 
by $p$. Then $\Lambda$ is finite, say $\Lambda = \{ (P_j,Q_j) \colon 1 \le j \le t \}$ with $t = \abs{\Lambda}$;
writing $m_j := e_f(P_j)$ and $n_j := e_g(Q_j)$, we have
\begin{align*}
\sum_{i=1}^r (2\g_i-2) &= -2 + 2(m-1)(n-1)\\
&\qquad - \sum_{j=1}^t\Bigl((m_j-1)(n_j-1) + \gcd(m_j,n_j)-1\Bigr).
\end{align*}
\end{enumerate}
\end{cor}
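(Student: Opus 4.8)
The plan is to obtain both parts as the special case $A=B=D=\bP^1$ of Theorem~\ref{main}. A nonconstant element of $k(x)$ is the same thing as a nonconstant morphism $\bP^1\to\bP^1$, so I would first record that $f$ and $g$ define such morphisms with $\g_A=\g_B=\g_D=0$, and that the fiber product of $f$ and $g$ is the curve $f(x)=g(y)$ cut out by the numerator $H(x,y)$ of $f(x)-g(y)$, so that its normalized irreducible components are precisely the $C_i$ of Theorem~\ref{main}. With these identifications in hand, each part of the corollary reduces to substituting $\g_A=\g_B=\g_D=0$ into the corresponding part of Theorem~\ref{main} and simplifying.

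For \eqref{rattame}, the hypothesis $p\nmid e_f(P)$ for every $P\in\bP^1(k)$ says exactly that $f$ is tamely ramified, so \eqref{maintame} applies. Substituting $\g_B=0$ turns $m(2\g_B-2)$ into $-2m$. I would then note that the summand $e_f(P)-\gcd(e_f(P),e_g(Q))$ vanishes whenever $e_f(P)=1$, so the double sum over $R\in D(k)$ collapses to the sum over those pairs $(P,Q)$ with $f(P)=g(Q)$ and $e_f(P)>1$, which is precisely $\Gamma$. Finiteness of $\Gamma$ is clear since $f$ has only finitely many ramification points and each $g$-fiber is finite; writing $m_j=e_f(P_j)$ and $n_j=e_g(Q_j)$ then yields the stated formula.

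For \eqref{ratwild}, I would first translate the hypothesis: a common wildly ramified branch point $R$ of $f$ and $g$ amounts to a pair $(P,Q)$ with $f(P)=g(Q)$, $p\mid e_f(P)$, and $p\mid e_g(Q)$, and since $p\mid e_f(P)$ forces $e_f(P)>1$ (and likewise for $g$), such a pair lies in $\Lambda$; hence the assumption that $\Lambda$ contains no pair whose two ramification indices are both divisible by $p$ is exactly the hypothesis of \eqref{mainwild}. Substituting $\g_A=\g_B=\g_D=0$ gives leading terms $-2m-2n+2mn$, which I would rewrite as $-2+2(m-1)(n-1)$ using $2(m-1)(n-1)=2mn-2m-2n+2$. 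As before, the summand $(e_f(P)-1)(e_g(Q)-1)+\gcd(e_f(P),e_g(Q))-1$ vanishes unless both $e_f(P)>1$ and $e_g(Q)>1$, so the double sum restricts to the sum over $\Lambda$, giving the claimed formula.

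The arithmetic here is routine, so I do not expect a genuine obstacle; the only points requiring care are the bookkeeping identifications at the outset---matching the fiber product of the two morphisms with the affine curve $H(x,y)=0$ together with its components and normalizations, and confirming that the hypotheses phrased in terms of ramification indices correspond exactly to the tameness and no-common-wild-branch-point conditions of Theorem~\ref{main}. Once these are settled, both formulas follow by direct substitution, the elementary simplification of the genus terms, and the observation that each sum is supported on its ramified pairs.
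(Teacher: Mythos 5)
Your proposal is correct and is exactly the route the paper takes: the paper simply states that Corollary~\ref{introrat} ``follows immediately from Theorem~\ref{main},'' and your write-up supplies the routine specialization $A=B=D=\bP^1$, the hypothesis translations, and the arithmetic that this one-line claim leaves to the reader. All of your identifications and simplifications check out.
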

\renewcommand{\theenumi}{\arabic{enumi}}
\renewcommand{\labelenumi}{(\arabic{enumi})}

\begin{rmk}
For the benefit of those whose background is analysis rather than algebraic geometry, we note the following alternate description of the $C_i$'s in case $k=\C$: if we write the numerator of the bivariate rational function $f(x)-g(y)$ as the product $\prod_{i=1}^r H_i(x,y)$ of irreducible polynomials $H_i(x,y)\in\C[x,y]$, then $C_i$ is the unique compact Riemann surface (up to conformal equivalence) for which there exist a finite subset $S_i$ of $C_i$, and a finite subset $T_i$ of the set $U_i$ of zeroes of $H_i(x,y)$ in $\C^2$, such that $C_i\setminus S_i$ is biholomorphic to $U_i\setminus T_i$.
\end{rmk}

We now state our results for polynomials, where we incorporate the ramification 
over $\infty$ into the formulas.

\begin{cor}\label{intropol}
Let $k$ be an algebraically closed field of characteristic $p\ge 0$, and let $f,g\in k[x]\setminus k$ have degrees 
$m$ and $n$, respectively. Let $C_1,\dots,C_r$ be the normalizations of the irreducible components of $f(x)=g(y)$, 
and write $\g_i$ for the genus of $C_i$.
\renewcommand{\theenumi}{\ref{intropol}.\arabic{enumi}}
\renewcommand{\labelenumi}{(\thethm.\arabic{enumi})}
\begin{enumerate}
\item \label{poltame} Suppose $p \nmid m$ and $p \nmid e_f(\alpha)$ for each $\alpha \in k$. Let $\Gamma^*$ be 
the set of pairs $(P,Q) \in k \times k$ such that $f(P) = g(Q)$ and $e_f(P) > 1$. Then $\Gamma^*$ is finite, 
say $\Gamma^* = \{ (P_j,Q_j) \colon 1 \le j \le s \}$ with $s = \abs{\Gamma^*}$; 
writing $m_j := e_f(P_j)$ and $n_j := e_g(Q_j)$, we have 
\[
\sum_{i=1}^r (2\g_i-2) = -m - \gcd(m,n) + \sum_{j=1}^s\Bigl(m_j - \gcd(m_j,n_j)\Bigr).
\]
\item \label{polwild} Let $\Lambda^*$ be the set of pairs $(P,Q) \in k \times k$ such that $f(P) = g(Q)$ and 
$e_f(P) > 1$ and $e_g(Q) > 1$. Suppose $\Lambda^* \cup \{(m,n)\}$ does not contain a pair of integers divisible 
by $p$. Then $\Lambda^*$ is finite, say $\Lambda^* = \{ (P_j,Q_j) \colon 1 \le j \le t \}$ with $t = \abs{\Lambda^*}$;
writing $m_j:=e_f(P_j)$ and $n_j:=e_g(Q_j)$, we have
\begin{align*}
\sum_{i=1}^r (2\g_i-2) &= -1 + (m-1)(n-1) - \gcd(m,n)\\
&\qquad - \sum_{j=1}^t\Bigl( (m_j-1)(n_j-1) + \gcd(m_j,n_j)-1 \Bigr).
\end{align*}
\end{enumerate}
\end{cor}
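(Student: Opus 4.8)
The plan is to deduce this corollary from Theorem~\ref{main} by taking $A=B=D=\bP^1$ and viewing $f$ and $g$ as the associated degree-$m$ and degree-$n$ morphisms $\bP^1\to\bP^1$, so that $\g_A=\g_B=\g_D=0$. The one extra ingredient beyond Theorem~\ref{main} is the behavior over $\infty$: since $f$ and $g$ are polynomials, each has $\infty$ as its unique preimage of $\infty$, with $e_f(\infty)=m$ and $e_g(\infty)=n$. First I would record this fact and use it to split the outer sum $\sum_{R\in\bP^1(k)}$ appearing in Theorem~\ref{main} into the single term $R=\infty$ together with the terms $R\in k$, where by the total ramification over $\infty$ the former contributes a correction involving $\gcd(m,n)$ and the latter range over pairs $(P,Q)\in k\times k$ with $f(P)=g(Q)$.

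Next I would match the hypotheses. For part~\eqref{poltame}, tameness of $f$ at every point of $\bP^1(k)$ amounts to $p\nmid e_f(\alpha)$ for all $\alpha\in k$ together with $p\nmid e_f(\infty)=m$, which is exactly the stated hypothesis, so the formula \eqref{maintame} applies. For part~\eqref{polwild}, I would check that $f$ and $g$ have a common wildly ramified branch point if and only if either $\infty$ is such (equivalently $p\mid m$ and $p\mid n$) or some finite $R\in k$ is such (equivalently some $(P,Q)\in\Lambda^*$ has $p\mid e_f(P)$ and $p\mid e_g(Q)$); the assumption that $\Lambda^*\cup\{(m,n)\}$ contains no pair of integers both divisible by $p$ rules out both possibilities, so the formula \eqref{mainwild} applies. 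Finiteness of $\Gamma^*$ and $\Lambda^*$ follows since $f$ has only finitely many ramified points and each value has only finitely many $g$-preimages.

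With the hypotheses in place, the computation is bookkeeping. In part~\eqref{poltame} the constant term $m(2\g_B-2)$ becomes $-2m$; the term $R=\infty$ contributes $e_f(\infty)-\gcd(e_f(\infty),e_g(\infty))=m-\gcd(m,n)$; and the terms $R\in k$ range over all pairs $(P,Q)\in k\times k$ with $f(P)=g(Q)$, of which only those with $e_f(P)>1$ (that is, the pairs in $\Gamma^*$) contribute, giving $\sum_{j=1}^s(m_j-\gcd(m_j,n_j))$. Summing $-2m$ with $m-\gcd(m,n)$ and this last sum yields the asserted formula. Part~\eqref{polwild} is identical in spirit: the constant term becomes $-2m-2n+2mn=2(m-1)(n-1)-2$, the term $R=\infty$ contributes $(m-1)(n-1)+\gcd(m,n)-1$, and the finite terms reduce to a sum over $\Lambda^*$ because the summand vanishes unless both $e_f(P)>1$ and $e_g(Q)>1$; collecting these gives the stated identity after simplifying $2(m-1)(n-1)-2-[(m-1)(n-1)+\gcd(m,n)-1]$ to $(m-1)(n-1)-1-\gcd(m,n)$.

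The only genuinely delicate step is the hypothesis translation in part~\eqref{polwild}: one must be careful that the ``common wildly ramified branch point'' condition of Theorem~\ref{main} concerns the existence of \emph{some} wildly ramified preimage over a common value, and verify that over finite $R$ this is captured precisely by a pair in $\Lambda^*$ with both indices divisible by $p$, while over $\infty$ it is captured by the extra pair $(m,n)$. Everything else is a direct specialization of Theorem~\ref{main} combined with the total ramification of polynomials over $\infty$.
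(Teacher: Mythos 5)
Your proposal is correct and follows exactly the route the paper intends: the paper states that Corollary~\ref{intropol} ``follows immediately'' from Theorem~\ref{main}, and your specialization to $A=B=D=\bP^1$, the splitting off of the $R=\infty$ term using total ramification of polynomials over $\infty$, the hypothesis translation (including the careful point about $(m,n)$ encoding wild ramification over $\infty$), and the arithmetic all check out. The only microscopic wrinkle is your finiteness argument for $\Lambda^*$, which invokes finiteness of the ramified points of $f$; if $f$ is inseparable this fails for $f$, but the hypothesis on $\Lambda^*\cup\{(m,n)\}$ forces at least one of $f,g$ to be separable, and running the same argument with that map closes the gap.
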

\renewcommand{\theenumi}{\arabic{enumi}}
\renewcommand{\labelenumi}{(\arabic{enumi})}

\begin{rmk}
The formula in \eqref{poltame} is not always valid 
if one only assumes the hypotheses of \eqref{polwild}: for instance, if $f(x) = x^p-x$  and $g(y) = y$ where 
$p>0$ then the left side of the formula in \eqref{poltame} is $-2$ but the right side is $-p-1$. However, the 
formulas in \eqref{rattame} and \eqref{poltame} are useful when they do apply, since 
often there is a point $R\in k$ for which the pairs in $f^{-1}(R)\times g^{-1}(R)$ contribute enough to 
the right sides of these formulas in order to force the left sides to be large. For instance, special cases of these formulas are 
used in this way in \cite{AZ1,AZ0,BT,CHZ,DW,Ritt,Rittrat}.
\end{rmk}

Several authors have proven or stated special cases of these results. For instance:
\begin{itemize}
\item The case $k=\C$, $r=1$, and $\g_1=0$ of \eqref{rattame}  was shown by Ritt \cite[\S 1]{Rittrat}, who had previously treated 
a special case \cite[pp.~60--61]{Ritt}.  Ritt's proof immediately extends to the the case that $r=1$ and $f(x)$ and $g(x)$ are tamely 
ramified \cite[Prop.~2]{Fried}. 
\item The case $\charp(k)=0$ and $r=1$ of \eqref{polwild} is the main result of \cite{Kang};
a further specialization of this case is \cite[Thm.~1.5]{Ng}.
\item The case $k=\mybar{\F_2}$, $f(x) = x^m$, and $g(y) = y^n+y$ of \eqref{poltame} is the main result of \cite{Rod}.
\end{itemize}

Our formulas also yield much simpler proofs of various known results, such as \cite[Thm.~1.1 and 1.3]{Fuji} and \cite[Thm.~1 and 2]{Wang}.  We note that the former reference uses the second main theorem of Nevanlinna theory to prove certain results in case $k=\C$, and our formulas immediately yield stronger results in this case which also extend to arbitrary $k$.

This paper is organized as follows. In the next section we recall some known results which will be used in this paper.
We conclude in Section 3 with proofs of Theorem~\ref{main}, Corollary~\ref{introrat}, and Corollary~\ref{intropol}.

\section{Previous results}

In this section we list the previous results used in this paper. We use the following notation and conventions:
\begin{itemize}
\item $k$ is an algebraically closed field of characteristic $p\ge 0$;
\item all curves are assumed to be smooth, projective, and irreducible;
\item if $C$ is a curve over $k$, then $\g_C$ denotes the genus of $C$, and $C(k)$ denotes the set 
of $k$-rational points on $C$;
\item if $f \colon C\to D$ is a nonconstant morphism between curves over $k$, and $P\in C(k)$, then $e_f(P)$ denotes 
the ramification index of $P$; if in addition $f$ is separable then $d_f(P)$ denotes the different exponent of $P$ 
under $f$.
\end{itemize}

We begin with the Riemann--Hurwitz formula \cite[Thm.~3.4.13]{St}:

\begin{lemma}\label{rh}
If $f \colon C \to D$ is a separable nonconstant morphism of curves over $k$, then
\[
2\g_C-2 = \deg(f) \cdot (2\g_D-2) + \sum_{P\in C(k)} d_f(P).
\]
\end{lemma}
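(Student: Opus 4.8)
The plan is to realize both sides of the formula via canonical divisors. Recall that on any curve $C$ the degree of the divisor of a nonzero rational differential equals $2\g_C-2$. So I would fix a nonzero rational differential $\omega$ on $D$ and pull it back along $f$; since $f$ is separable, $f^*\omega$ is again a nonzero rational differential on $C$, whence $\deg(\operatorname{div} f^*\omega)=2\g_C-2$ while $\deg(\operatorname{div}\omega)=2\g_D-2$. The theorem then amounts to comparing the divisors of $\omega$ and $f^*\omega$ point by point.

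The heart of the argument is a local computation at each $P\in C(k)$ lying over $Q:=f(P)$. Choosing uniformizers $t$ at $P$ and $s$ at $Q$, and writing $\omega=h\,ds$ near $Q$, the chain rule gives $f^*\omega=(h\circ f)\,d(f^*s)=(h\circ f)\,\frac{d(f^*s)}{dt}\,dt$. Since $s$ and $t$ are uniformizers, $ds$ and $dt$ have order $0$ at $Q$ and $P$ respectively, so $\operatorname{ord}_Q\omega=\operatorname{ord}_Q h$ and $\operatorname{ord}_P(h\circ f)=e_f(P)\operatorname{ord}_Q h$. I would then invoke the key input that the different exponent is computed by $d_f(P)=\operatorname{ord}_P\bigl(\tfrac{d(f^*s)}{dt}\bigr)$; this is Dedekind's characterization of the different, and it holds here because $k$ is algebraically closed (so the relevant local rings are monogenic with trivial residue extension, and $\Omega_{\mathcal O_P/\mathcal O_Q}$ is the cyclic torsion module $\mathcal O_P\,dt/\langle d(f^*s)\rangle$ of length $d_f(P)$). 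Combining these yields the local identity $\operatorname{ord}_P(f^*\omega)=e_f(P)\operatorname{ord}_{f(P)}(\omega)+d_f(P)$.

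Summing the local identity over all $P\in C(k)$ then finishes the proof. Grouping the points by their image and using the fundamental identity $\sum_{P\in f^{-1}(Q)}e_f(P)=\deg(f)$, valid for every $Q\in D(k)$ over the algebraically closed field $k$, I would obtain
\[
2\g_C-2=\deg(\operatorname{div} f^*\omega)=\deg(f)\sum_{Q\in D(k)}\operatorname{ord}_Q(\omega)+\sum_{P\in C(k)}d_f(P)=\deg(f)(2\g_D-2)+\sum_{P\in C(k)}d_f(P),
\]
as desired; here the sums are finite because $d_f(P)=0$ for unramified $P$ and $\operatorname{ord}_Q(\omega)=0$ for all but finitely many $Q$.

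The main obstacle is the local relation between the order of vanishing of $d(f^*s)$ and the different exponent, i.e.\ justifying $d_f(P)=\operatorname{ord}_P(d(f^*s)/dt)$. In the tamely ramified case $p\nmid e_f(P)$ this is elementary: writing $f^*s=u\,t^{e}$ with $e=e_f(P)$ and $u$ a unit, one finds $\operatorname{ord}_P(d(f^*s)/dt)=e-1=d_f(P)$. The wildly ramified case requires the genuine theory of the different, but all of this is standard and recorded in \cite{St}; the remaining steps are purely formal manipulations of divisor degrees.
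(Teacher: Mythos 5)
This lemma is one of the paper's quoted background results: it appears in Section~2 (``Previous results'') with a citation to \cite{St} and no proof, so there is no internal argument to compare yours against. What you have written is the standard textbook proof of the Riemann--Hurwitz formula, and it is correct: you pull back a nonzero rational differential $\omega$ along the separable map $f$ (separability is exactly what guarantees $f^*\omega\neq 0$), use $\deg(\operatorname{div}\,\omega)=2\g_D-2$ and $\deg(\operatorname{div}\,f^*\omega)=2\g_C-2$, establish the local identity $\operatorname{ord}_P(f^*\omega)=e_f(P)\operatorname{ord}_{f(P)}(\omega)+d_f(P)$, and sum using the fundamental identity $\sum_{P\in f^{-1}(Q)}e_f(P)=\deg(f)$. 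You have also correctly isolated the one step that carries real content, namely the identification $d_f(P)=\operatorname{ord}_P\bigl(d(f^*s)/dt\bigr)$; since $k$ is algebraically closed the residue extension is trivial, the completed local ring at $P$ is monogenic over that at $f(P)$, and this identification is Dedekind's theorem on the different (it is precisely what \cite{St} proves en route to Thm.~3.4.13, and in the tame case it reduces to the elementary computation $\operatorname{ord}_P\bigl(d(ut^e)/dt\bigr)=e-1$ that you indicate). The only cosmetic caveat is that one should work with the separating differential module (equivalently, Weil differentials or completed local differentials) so that $d(f^*s)/dt$ is meaningful, but this is handled by the standard theory you invoke. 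In short: the proof is sound, and it supplies for the reader exactly what the paper delegates to \cite{St}.
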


The ramification index \cite[Def.~3.1.5]{St} and different exponent \cite[Def.~3.4.3]{St}
are related by Dedekind's different theorem, e.g.\ cf.\ \cite[Prop.~III.13]{Se} or \cite[Thm.~3.5.1]{St}:

\begin{lemma}\label{d}
If $f \colon C \to D$ is a separable nonconstant morphism of curves over $k$, then for any $P \in C(k)$ we have 
$d_f(P) \ge e_f(P)-1$, with equality holding if and only if $P$ is tamely ramified under $f$.  
\end{lemma}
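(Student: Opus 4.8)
The plan is to prove Lemma~\ref{d}, which I recognize as the classical Dedekind different theorem. The goal is to show that for a separable nonconstant morphism $f\colon C\to D$ and a point $P\in C(k)$, the different exponent satisfies $d_f(P)\ge e_f(P)-1$, with equality exactly when $P$ is tamely ramified (i.e.\ $p\nmid e_f(P)$). Since this is a purely local statement at $P$, I would work in the completed local rings: let $B=\widehat{\mathcal{O}}_P$ with maximal ideal generated by a uniformizer $t$, lying over $A=\widehat{\mathcal{O}}_{f(P)}$ with uniformizer $s$, so that $B/A$ is a finite separable extension of complete discrete valuation rings with $e:=e_f(P)$ equal to the ramification index. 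Since $k$ is algebraically closed, the residue field extension is trivial, so the different exponent $d_f(P)$ equals the valuation $v_P$ of the different $\mathfrak{d}_{B/A}$, and we may write $s=u\,t^{e}$ for some unit $u\in B^\times$.

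The key computational step is to compute the different via the minimal polynomial of the uniformizer. First I would establish that $B=A[t]$, i.e.\ that $t$ generates $B$ as an $A$-algebra; this holds because $k$ is algebraically closed (the residue extension is separable and trivial), so by the standard structure theory of complete DVRs the uniformizer $t$ is a generator. Then the different $\mathfrak{d}_{B/A}$ is generated by $\varphi'(t)$, where $\varphi(X)\in A[X]$ is the minimal polynomial of $t$ over the fraction field of $A$; this is the standard formula for the different of a monogenic extension (e.g.\ Serre, \emph{Local Fields}, Ch.~III, Cor.~2 to Prop.~11, or Stichtenoth~\cite[Thm.~3.5.10]{St}). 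Thus $d_f(P)=v_P(\varphi'(t))$, and the whole problem reduces to estimating $v_P(\varphi'(t))$.

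The main obstacle — and the heart of the lemma — is the valuation computation, which is where the tame/wild dichotomy enters. The extension has degree $e$, and $\varphi(X)$ is an Eisenstein polynomial of degree $e$ in $X$ relative to $s$; writing $\varphi(X)=X^e + a_{e-1}X^{e-1}+\dots+a_0$ with $v_P(a_i)\ge e$ (Eisenstein, since $t^e$ is $s$ times a unit) and $v_P(a_0)=e$. Differentiating gives $\varphi'(t)=e\,t^{e-1}+\sum_{i=1}^{e-1} i\,a_i t^{i-1}$. I would bound the valuations of the two kinds of terms: the leading term $e\,t^{e-1}$ has valuation $v_P(e)+(e-1)$, where $v_P(e)=0$ if $p\nmid e$ (tame) and $v_P(e)\ge e$ if $p\mid e$ (wild, since $e\in A$ has positive valuation only through $s$), while each remaining term $i\,a_i t^{i-1}$ has valuation at least $e+(i-1)\ge e$ because $v_P(a_i)\ge e$. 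Comparing, in the tame case the leading term strictly dominates and $v_P(\varphi'(t))=e-1$ exactly; in the wild case every term has valuation $\ge e > e-1$, forcing $d_f(P)\ge e > e-1$, which gives strict inequality. Assembling these bounds yields $d_f(P)\ge e-1$ always, with equality iff $p\nmid e$, completing the proof.

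I should note that since the lemma is explicitly attributed to the references \cite{Se} and \cite{St}, a fully acceptable alternative is simply to cite those sources rather than reprove the result; the sketch above indicates the proof one would find there should a self-contained argument be desired.
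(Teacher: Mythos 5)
The paper offers no proof of this lemma at all: it is quoted as Dedekind's different theorem with citations to \cite{Se} and \cite{St}, so your primary suggestion (simply cite those sources) matches the paper exactly, and your sketch of the monogenic/Eisenstein computation of $v_P(\varphi'(t))$ is the correct standard argument found in those references. The only imprecision is your justification that $v_P(e)\ge e$ in the wild case ``since $e\in A$ has positive valuation only through $s$'' --- in equal characteristic $p$ the integer $e$ is simply $0$ in $k$, so that term vanishes outright ($v_P(e)=\infty$); the stated inequality still holds and the conclusion is unaffected.
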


The next result describes the ramification index and different exponent in a tower of curves 
\cite[Prop.~3.1.6 and Cor.~3.4.12]{St}:

\begin{lemma} \label{tower}
Let $f \colon C \to D$ and $g \colon D \to E$ be nonconstant morphisms of curves over $k$. Then for any $P \in C(k)$ we have 
\[
e_{g \circ f}(P) = e_f(P) \cdot e_g(f(P)).
\] 
If in addition both $f$ and $g$ are separable then
\[
d_{g \circ f}(P) = e_f(P) \cdot d_g(f(P)) + d_f(P).
\]
\end{lemma}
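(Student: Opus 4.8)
The plan is to reduce both assertions to statements about the tower of discrete valuation rings attached to $P$, treating the two formulas separately since the first needs no separability while the second does. Write $P':=f(P)$ and $P'':=g(P')$, and let $w,v,u$ be the normalized valuations of $k(C),k(D),k(E)$ at $P,P',P''$, forming a tower in which $w$ lies over $v$ and $v$ lies over $u$ under the inclusions induced by $f$ and $g$. Because $k$ is algebraically closed every residue field is $k$, so both $e$ and $d$ are computed in the completed local rings $\widehat{\mathcal O}_{P''}\subseteq\widehat{\mathcal O}_{P'}\subseteq\widehat{\mathcal O}_P$, each a power series ring over $k$; this lets me argue entirely inside this one tower.

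First I would establish the multiplicativity of ramification indices, which is the only content of the first formula and requires no separability. Fixing a uniformizer $\pi$ at $P''$, the definitions give $e_{g\circ f}(P)=w(\pi)$ and $e_g(P')=v(\pi)$, while $w(x)=e_f(P)\,v(x)$ for every $x\in k(D)$ by definition of $e_f(P)$. Taking $x=\pi$ gives $e_{g\circ f}(P)=w(\pi)=e_f(P)\,v(\pi)=e_f(P)\,e_g(P')$, as claimed; this is just the nesting of value groups in a tower of discrete valuations.

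For the different formula I would deduce it from the transitivity (chain rule) of the different ideal. Let $\mathfrak d_{C/D},\mathfrak d_{D/E},\mathfrak d_{C/E}$ denote the differents of the three separable extensions of completed local rings. The key input is
\[
\mathfrak d_{C/E}=\mathfrak d_{C/D}\cdot\bigl(\mathfrak d_{D/E}\,\widehat{\mathcal O}_P\bigr),
\]
after which the formula follows by taking $w$-valuations: using $w(\mathfrak d_{C/D})=d_f(P)$, $v(\mathfrak d_{D/E})=d_g(P')$, and the relation $w(x)=e_f(P)\,v(x)$ from the previous step to get $w\bigl(\mathfrak d_{D/E}\,\widehat{\mathcal O}_P\bigr)=e_f(P)\,v(\mathfrak d_{D/E})=e_f(P)\,d_g(P')$, I obtain $d_{g\circ f}(P)=d_f(P)+e_f(P)\,d_g(P')$, which is the asserted identity.

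The hard part is therefore the transitivity of the different, and this is exactly where separability of both $f$ and $g$ enters, since it is what makes the trace forms nondegenerate and the differents proper nonzero ideals. I would prove it through the codifferent (inverse different) $\mathfrak C_{C/D}=\{x\in k(C):\Tr_{C/D}(x\,\widehat{\mathcal O}_P)\subseteq\widehat{\mathcal O}_{P'}\}=\mathfrak d_{C/D}^{-1}$ together with the transitivity of the trace $\Tr_{C/E}=\Tr_{D/E}\circ\Tr_{C/D}$: the latter yields $\mathfrak C_{C/E}=\mathfrak C_{C/D}\cdot\bigl(\mathfrak C_{D/E}\,\widehat{\mathcal O}_P\bigr)$ directly from the definitions, and inverting gives the displayed transitivity of the differents. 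Once this standard trace-duality step is in hand everything else is the routine valuation bookkeeping above, so in the write-up I would either reproduce that short argument or, as the paper does, cite it from a standard reference.
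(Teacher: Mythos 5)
Your proposal is correct and follows essentially the same route as the source the paper cites for this lemma (Stichtenoth, Prop.~3.1.6 and Cor.~3.4.12, the paper itself giving no independent proof): multiplicativity of $e$ via the nesting of value groups, and the different formula via trace-duality and the transitivity of the different in the tower of completed local rings. No gaps.
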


We will use the following Fundamental Equality \cite[Thm.~3.1.11]{St}: 

\begin{lemma}\label{fund}
If $f \colon C \to D$ is a nonconstant morphism of curves over $k$, then for any $Q \in D(k)$ we have 
\[
\deg(f)=\sum_{P \in f^{-1}(Q)} e_f(P).
\]
\end{lemma}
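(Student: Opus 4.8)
The plan is to translate the statement into the language of function fields and then invoke the fundamental identity relating the degree of a finite extension to the ramification and residue degrees at a place. Write $K = k(D)$ and $L = k(C)$ for the function fields, so that $f$ corresponds to an inclusion $K \hookrightarrow L$ with $[L:K] = \deg(f) =: n$; note we do not assume $f$ is separable, so $L/K$ may be inseparable in positive characteristic, but this will not affect the argument. A point $Q \in D(k)$ corresponds to a discrete valuation $v_Q$ of $K$ with valuation ring $\mathcal{O}_Q := \mathcal{O}_{D,Q}$, and the points $P \in f^{-1}(Q)$ correspond bijectively to the places of $L$ whose restriction to $K$ is $v_Q$; the ramification index $e_f(P)$ is precisely the ramification index of the corresponding valuation over $v_Q$.

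First I would consider the integral closure $\mathcal{O}'$ of the discrete valuation ring $\mathcal{O}_Q$ inside $L$. Since $L/K$ is a finite extension of one-variable function fields, $\mathcal{O}'$ is a finitely generated $\mathcal{O}_Q$-module; being torsion-free over the principal ideal domain $\mathcal{O}_Q$, it is in fact free, and its rank is $\dim_K(\mathcal{O}'\otimes_{\mathcal{O}_Q}K)=\dim_K L = n$. The ring $\mathcal{O}'$ is a semilocal Dedekind domain whose maximal ideals $\mathfrak{m}_P$ are exactly the ones cut out by the places $P$ lying over $Q$. The key step is then to reduce modulo the maximal ideal $\mathfrak{m}_Q$ of $\mathcal{O}_Q$: on the one hand, freeness of rank $n$ gives
\[
\dim_{k(Q)} \bigl(\mathcal{O}'/\mathfrak{m}_Q \mathcal{O}'\bigr) = n,
\]
where $k(Q) = \mathcal{O}_Q/\mathfrak{m}_Q$ is the residue field at $Q$; on the other hand, the Chinese Remainder Theorem decomposes $\mathcal{O}'/\mathfrak{m}_Q\mathcal{O}'$ as a product of the local Artinian rings $\mathcal{O}'_{\mathfrak{m}_P}/\mathfrak{m}_Q\mathcal{O}'_{\mathfrak{m}_P}$. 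Since a uniformizer at $Q$ has valuation $e_f(P)$ at $P$, each such factor is $\mathcal{O}'_{\mathfrak{m}_P}/\mathfrak{m}_P^{e_f(P)}$, and its filtration by powers of $\mathfrak{m}_P$ has $e_f(P)$ successive quotients, each of $k(Q)$-dimension $[k(P):k(Q)]$. Comparing the two counts yields the fundamental identity
\[
n = \sum_{P \in f^{-1}(Q)} e_f(P)\cdot [k(P):k(Q)].
\]

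To finish, I would use that $k$ is algebraically closed: the residue field $k(Q)$ at every point of $D$ equals $k$, and likewise $k(P) = k$ for every $P \in C(k)$, so all residue degrees $[k(P):k(Q)]$ are $1$. Substituting this into the identity gives exactly $\deg(f) = \sum_{P \in f^{-1}(Q)} e_f(P)$, as claimed. The main obstacle is the input underlying the fundamental identity, namely the finiteness of the integral closure $\mathcal{O}'$ as an $\mathcal{O}_Q$-module (equivalently, that $\mathcal{O}'$ is free of rank $n$ over the discrete valuation ring $\mathcal{O}_Q$); once this module-finiteness is in hand, the remaining dimension count via the Chinese Remainder Theorem and local filtrations is routine. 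Because this result is entirely standard, in practice one would simply cite it, as is done here via \cite[Thm.~3.1.11]{St}.
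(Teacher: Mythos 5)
Your proof is correct: the paper gives no argument for this lemma, simply citing \cite[Thm.~3.1.11]{St}, and your write-up is a faithful reconstruction of the standard proof of that Fundamental Equality (the integral closure is free of rank $n$ over the discrete valuation ring $\mathcal{O}_Q$, then the Chinese Remainder Theorem and the local filtrations by powers of the maximal ideals give $n=\sum_P e_f(P)\,[k(P):k(Q)]$), followed by the observation that all residue degrees are $1$ over an algebraically closed field. You also correctly isolate the one genuinely nontrivial input, namely module-finiteness of the integral closure --- which does require a word of justification when $L/K$ is inseparable, but holds here because $\mathcal{O}_Q$ is a localization of a finitely generated $k$-algebra --- so there is nothing to add.
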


We will also use the following version of Abhyankar's lemma \cite[Thm.~3.9.1 and Prop.~3.10.2]{St}:

\begin{lemma}\label{abh}
Let $f\colon A\to D$ and $g\colon B\to D$ be nonconstant morphisms of curves over $k$, and let $C$ be the normalization 
of a component of the fiber product of $f$ and $g$, with induced maps $\phi \colon C \to A$ 
and $\psi \colon C \to B$. For any $S \in C(k)$ such that $\phi(S)$ is tamely ramified under $f$, we have 
\[
e_{f \circ \phi}(S) = \lcm\bigl(e_f(\phi(S)),e_g(\psi(S))\bigr).
\]
\end{lemma}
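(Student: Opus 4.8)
The plan is to reduce the statement to a computation with completed local fields and then carry out that computation using the tame (radical) normal form together with Hensel's lemma. Write $R=f(\phi(S))=g(\psi(S))$, $P=\phi(S)$, $Q=\psi(S)$, and set $h:=f\circ\phi=g\circ\psi$, $m':=e_f(P)$, $n':=e_g(Q)$. First I would observe that the hypothesis $p\nmid m'$ forces $f$ to be separable: factoring $f$ through its maximal purely inseparable subextension and using Lemma~\ref{fund} (together with the fact that a valuation extends uniquely to a purely inseparable extension) shows that the purely inseparable part of $f$ contributes a fixed factor $p^{s}$ to every ramification index, so $p\nmid m'$ forces $s=0$. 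By Lemma~\ref{tower} we already get the easy divisibilities $e_f(P)\mid e_h(S)$ and $e_g(Q)\mid e_h(S)$, hence $\lcm(m',n')\mid e_h(S)$; the work is to prove the reverse.

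Next I would pass to completions. Let $K=k((t))$ be the completed local field of $D$ at $R$, and let $L_1=k((s))$ and $L_2=k((r))$ be those of $A$ at $P$ and $B$ at $Q$. Because $p\nmid m'$ and $k$ is algebraically closed, a Hensel argument lets me rechoose the uniformizer $s$ so that $t=s^{m'}$ exactly (the tame totally ramified extension $L_1/K$ is radical); on the other side $v_r(t)=n'$. Since $f$, hence $k(A)/k(D)$, is separable, the base change $k(C)/k(B)$ is separable, so the completion of $k(C)$ at $S$ is the compositum $M:=L_1L_2$ inside $\overline{L_2}$, and $e_h(S)=e(M/K)=[M:K]$ because every residue field here is $k$. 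Thus it suffices to show $[M:K]=\lcm(m',n')$, equivalently $[M:L_2]=m'/\gcd(m',n')$.

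The core computation then runs as follows. Put $d:=\gcd(m',n')$, $m'=dm''$, $n'=dn''$ with $\gcd(m'',n'')=1$, and consider $\xi:=s^{m''}/r^{n''}\in M$. From $t=s^{m'}=r^{n'}\cdot(\text{unit})$ I get $\xi^{d}=s^{m'}/r^{n'}=(\text{unit of }k[[r]])$; since $d\mid m'$ we have $p\nmid d$, so by Hensel this unit has a $d$-th root in $L_2$, whence $\xi$ lies in $L_2$ up to a $d$-th root of unity and $s^{m''}\in L_2$. This gives $[M:L_2]=[L_2(s):L_2]\le m''$. Conversely, writing $w$ for the normalized valuation on $M$ and using $s^{m''}=(\text{unit})\,r^{n''}$ I get $m''\,w(s)=n''\,w(r)$; as $\gcd(m'',n'')=1$ this forces $m''\mid w(r)=e(M/L_2)=[M:L_2]$, so $[M:L_2]\ge m''$. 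Hence $[M:L_2]=m''=m'/d$ and $e_h(S)=[M:L_2]\cdot n'=\lcm(m',n')$.

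The step I expect to be the main obstacle is the geometric-to-local translation in the second paragraph: justifying that the completed local field of the \emph{normalized} fiber-product component at $S$ is precisely the compositum $M=L_1L_2$, so that $e_h(S)=e(M/K)$. Granting separability of $k(C)/k(B)$ (which I deduced from tameness of $P$) this follows from the standard decomposition $k(C)\otimes_{k(B)}L_2\cong\prod_{S'\mid Q}k(C)_{S'}$, each factor being a completion equal to the local compositum; alternatively one can work analytically with the branches of the plane-curve germ $s^{m'}=r^{n'}\cdot(\text{unit})$, which by the argument above splits into exactly $\gcd(m',n')$ smooth branches, matching the point count in the fiber product used elsewhere in the paper. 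Once this identification is in place, the tame normalization $t=s^{m'}$ and the gcd/Hensel computation are routine.
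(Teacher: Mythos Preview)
The paper does not actually prove this lemma; it is quoted as a known result from Stichtenoth \cite[Thm.~3.9.1 and Prop.~3.10.2]{St}, so there is no in-paper argument to compare against. Your proof is correct and is essentially the standard local proof of Abhyankar's lemma: pass to completions, put the tame side in Kummer form $t=s^{m'}$, and compute $[L_1L_2:L_2]$ by combining the Hensel $d$-th-root step with the valuation divisibility $m''\mid e(M/L_2)$.

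One point worth highlighting: your argument works even when $g$ is inseparable, since you only use separability of $k(C)/k(B)$, which you correctly derive from $p\nmid e_f(P)$ via the purely-inseparable-factor observation. Stichtenoth's Thm.~3.9.1 by itself assumes both subextensions are separable, which is presumably why the paper's citation also points to Prop.~3.10.2; your direct route sidesteps that splitting into cases. The step you flag as the main obstacle---identifying the completion $k(C)_S$ with the local compositum $L_1L_2$---is indeed the only place requiring care, and your justification via the decomposition $k(C)\otimes_{k(B)}L_2\cong\prod_{S'\mid Q}k(C)_{S'}$ (valid because $k(C)/k(B)$ is separable) together with $k(C)=k(A)\cdot k(B)$ is exactly what is needed.
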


The next result was proved in \cite{DZ}:

\begin{lemma}\label{df}
Assume $f \colon A \to D$ and $g \colon B \to D$ are nonconstant morphisms of curves over $k$. Let $C_1,\dots,C_r$ 
be the normalizations of the irreducible components of the fiber product of $f$ and $g$, with induced morphisms 
$\phi_i \colon C_i \to A$ and $\psi_i \colon C_i \to B$. Then for any pair $(P,Q) \in A(k) \times B(k)$ such that
$f(P) = g(Q)$ and $p \nmid \gcd\bigl(e_f(P),e_g(Q)\bigr)$ we have
\[
\sum_{i=1}^r \abs{\phi_i^{-1}(P)\cap\psi_i^{-1}(Q)}=\gcd\bigl(e_f(P),e_g(Q)\bigr).
\]
\end{lemma}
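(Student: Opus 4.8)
The plan is to reduce the global statement to a purely local computation at the single pair $(P,Q)$, and then to count the branches of the fiber product there by factoring one explicit power series. Write $R:=f(P)=g(Q)$, put $m:=e_f(P)$, $n:=e_g(Q)$, $d:=\gcd(m,n)$, so that $p\nmid d$ by hypothesis, and let $t,u,v$ be uniformizers of $D,A,B$ at $R,P,Q$. A point $S$ on some $C_i$ with $\phi_i(S)=P$ and $\psi_i(S)=Q$ is exactly a branch of the fiber product of $f$ and $g$ at $(P,Q)$, and these branches are in bijection with the minimal primes of the complete local ring
\[
\widehat{\mathcal O}_{A,P}\,\widehat\otimes_{\widehat{\mathcal O}_{D,R}}\,\widehat{\mathcal O}_{B,Q}\cong k[[u,v]]/\bigl(u^{m}\alpha(u)-v^{n}\beta(v)\bigr),
\]
where $\alpha,\beta$ are units with $t=u^{m}\alpha(u)$ and $t=v^{n}\beta(v)$. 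Since neither $u$ nor $v$ divides the defining power series, no branch is contained in the locus $\{t=0\}$, so inverting $t$ (equivalently $u$ and $v$) preserves all minimal primes while producing the Artinian algebra $k((u))\otimes_{k((t))}k((v))$; hence $\sum_{i=1}^{r}\abs{\phi_i^{-1}(P)\cap\psi_i^{-1}(Q)}$ equals the number of factor fields of this algebra, and it suffices to show that number is $d$.

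Because $p\nmid d$, at least one of $m,n$ is prime to $p$, and by the symmetry of the tensor product I may assume $p\nmid m$. Then $k((u))/k((t))$ is tamely and totally ramified of degree $m$, so after replacing $u$ by $u\cdot\alpha(u)^{1/m}$—an $m$th root exists because $\alpha(0)\ne 0$, $k$ is algebraically closed, and $p\nmid m$—I may assume $t=u^{m}$. The algebra then becomes $k((v))[x]/\bigl(x^{m}-v^{n}\beta(v)\bigr)$, so its factor fields correspond to the monic irreducible factors of $x^{m}-v^{n}\beta(v)$ over $k((v))$. Since $p\nmid m$ and $v^{n}\beta(v)\ne 0$, this polynomial is separable of degree $m$, hence equals the product of the distinct minimal polynomials of its $m$ roots; thus the number of factors is $m$ divided by the common degree of these roots over $k((v))$.

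It remains to show every root has degree $m':=m/d$ over $k((v))$. Set $w:=v^{1/m'}$ and $n':=n/d$, so $\gcd(m',n')=1$ and $[k((w)):k((v))]=m'$. Substituting $x=w^{n'}y$ converts $x^{m}=v^{n}\beta(v)$ into $y^{m}=\beta(w^{m'})$; as $\beta(w^{m'})$ is a unit of $k[[w]]$ and $p\nmid m$, this equation has $m$ distinct solutions $y\in k[[w]]^{\times}$, giving all $m$ roots $x=w^{n'}y\in k((w))$. For each root, the value group of $k((v))(x)$, measured in the normalized valuation $\mathrm{val}$ of $k((w))$, contains $\mathrm{val}(v)=m'$ and $\mathrm{val}(x)=n'$, hence contains $\gcd(m',n')=1$; since the residue field is $k$, this forces $k((v))(x)=k((w))$, so the root has degree exactly $m'$. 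Therefore $x^{m}-v^{n}\beta(v)$ has exactly $m/m'=d$ irreducible factors, which is what we wanted.

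I expect the first step to be the main obstacle: justifying cleanly that the branch count at $(P,Q)$ is computed by the completed tensor product of local fields, and in particular that passing from the (possibly non-reduced, when $g$ is inseparable) local ring of the fiber product to the smooth normalizations $C_i$ neither loses a branch nor manufactures a spurious one. By contrast, the conceptual crux of the computation is the hypothesis $p\nmid d$: it guarantees that one of the two local extensions is tame, which is precisely what allows every root of $x^{m}-v^{n}\beta(v)$ to lie in—and to generate—the single tame extension $k((v^{1/m'}))$. Without this hypothesis the polynomial could fail to be separable, or its roots could fail to generate a common field, and the count would no longer be $\gcd(m,n)$.
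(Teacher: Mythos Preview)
The paper does not actually prove this lemma: it is quoted in Section~2 as a previous result, with the proof deferred to the companion paper \cite{DZ}. So there is no in-paper argument to compare against, and I can only evaluate your proof on its own merits.

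Your argument is correct. The reduction to the complete local picture is sound: for an excellent one-dimensional scheme $X$ (such as $A\times_D B$), the number of points of the normalization lying over $x\in X$ equals the number of minimal primes of $\widehat{\mathcal O}_{X,x}$, because normalization commutes with completion for excellent rings and the completion of a normal local domain of dimension one is again a domain. Your observation that $u$ and $v$ do not divide $u^m\alpha(u)-v^n\beta(v)$ (since setting one to zero leaves a nonzero series in the other) cleanly justifies that inverting $t$ loses no minimal prime, and the identification of the localized ring with $k((u))\otimes_{k((t))}k((v))$ follows from finiteness of $k[[u]]$ over $k[[t]]$. The factorization count is also right: once $t=u^m$, the $m$ roots of $x^m=v^n\beta(v)$ differ by $m$-th roots of unity in $k$, so they all generate the same extension of $k((v))$, and your valuation argument pins down its degree as $m/d$ via $\gcd(m',n')=1$. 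The only mildly implicit step is that $k((v^{1/m'}))$ exists and has degree $m'$ over $k((v))$, but since $p\nmid m'$ this is immediate from Eisenstein.

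For what it is worth, a commented-out draft passage in the paper's source sketches exactly this local route---reduce to showing $k((u))\otimes_{k((t))}k((v))$ has the right structure by exhibiting a degree-$m/\gcd(m,n)$ minimal polynomial---so your approach is very likely the one taken in \cite{DZ} as well. Your self-assessment is accurate: the branch-counting justification is the part that requires the most care in writing up, and the hypothesis $p\nmid d$ enters precisely where you say, in forcing one side to be tame so that $x^m-v^n\beta(v)$ is separable with all roots lying in a single tame extension.
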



\section{Proofs of main results}

Corollaries~\ref{introrat} and \ref{intropol} follow immediately from Theorem~\ref{main}, so we need only prove the latter 
result. Throughout this section we assume the hypotheses and notation from Theorem~\ref{main}. For $i\in\{1,2,\dots,r\}$, 
let $\phi_i\colon C_i\to A$ and $\psi_i\colon C_i\to B$ be the maps arising from the fibered product. We begin by combining 
the Riemann--Hurwitz formula for the various maps $\psi_i$ into a single formula. We express this combined formula in terms 
of a summation whose summands will turn out to be convenient for explicit computation in the setting of Theorem~\ref{main}.

\begin{lemma} \label{verywild} 
If $f$ is separable then
\begin{align}
\label{321}\sum_{i=1}^r (2\g_i-2) &= m (2\g_B-2) +  \sum_{i=1}^r \sum_{S \in C_i(k)} d_{\psi_i}(S) \\
\begin{split}
&= \label{322}m (2\g_B-2) + n (2g_A-2) - mn (2\g_D-2) \\
 &\qquad- \sum_{i=1}^r \sum_{S\in C_i(k)}\Bigl(e_{\phi_i}(S) \cdot d_f(\phi_i(S)) - d_{\psi_i}(S) \Bigr).
\end{split}
\end{align}
If both $f$ and $g$ are separable then each $S\in C_i(k)$ satisfies
\begin{equation}
\label{diffexp}
e_{\phi_i}(S) \cdot d_f(\phi_i(S)) - d_{\psi_i}(S) = d_{f \circ \phi_i}(S) - d_{\phi_i}(S) - d_{\psi_i}(S),
\end{equation}
which yields the symmetric equation
\begin{equation}
\begin{aligned}
\label{323}\sum_{i=1}^r (2\g_i-2) &=  m (2\g_B-2) + n (2g_A-2) - mn (2\g_D-2) \\
 &\qquad- \sum_{i=1}^r 
\sum_{S \in C_i(k)} \Bigl( d_{f \circ \phi_i}(S) - d_{\phi_i}(S) - d_{\psi_i}(S)   \Bigr).
\end{aligned}
\end{equation}
\end{lemma}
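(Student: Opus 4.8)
The plan is to prove Lemma~\ref{verywild} by assembling the Riemann--Hurwitz formula (Lemma~\ref{rh}) across all the components $C_i$ and then rewriting the different-exponent sum in two complementary ways. First I would derive \eqref{321}: applying Lemma~\ref{rh} to each separable map $\psi_i\colon C_i\to B$ gives $2\g_i-2 = \deg(\psi_i)(2\g_B-2) + \sum_{S\in C_i(k)} d_{\psi_i}(S)$. Summing over $i$ and using that the degrees of the fiber-product projections to $B$ add up to $\deg(f)=m$ (this is the standard fact that $\sum_i \deg(\psi_i) = \deg(f)$, coming from the generic fiber of the fiber product having $m$ points counted with multiplicity over a generic point of $B$) yields \eqref{321}. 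Here I must check that each $\psi_i$ is separable, which follows from $f$ being separable: separability is preserved under base change, so the projection from the fiber product to $B$ is separable, hence so is each $\psi_i$ after normalization.

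Next I would establish \eqref{322} by re-expressing $\sum_i\sum_S d_{\psi_i}(S)$. The idea is to route through the composite map $f\circ\phi_i = g\circ\psi_i\colon C_i\to D$. Applying Lemma~\ref{rh} to $\psi_i$ and to $g\circ\psi_i$, or equivalently using the tower formula (Lemma~\ref{tower}) for $d_{g\circ\psi_i}(S) = e_{\psi_i}(S)\cdot d_g(\psi_i(S)) + d_{\psi_i}(S)$, lets me trade the sum of $d_{\psi_i}$ against a sum involving $d_g$ and the different of the composite. Concretely, I would start from \eqref{321}, add and subtract the Riemann--Hurwitz contributions of $\phi_i\colon C_i\to A$ (giving the $n(2\g_A-2)$ term, since $\sum_i\deg(\phi_i)=n$) and of the composite to $D$ (giving the $mn(2\g_D-2)$ term, since $\sum_i\deg(f\circ\phi_i)=mn$), and collect the leftover different terms into $\sum_i\sum_S\bigl(e_{\phi_i}(S)\,d_f(\phi_i(S)) - d_{\psi_i}(S)\bigr)$. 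The bookkeeping here is where I expect the main care to be needed: I must make sure the commutativity $f\circ\phi_i = g\circ\psi_i$ is used so that both routes $C_i\to A\to D$ and $C_i\to B\to D$ compute the same composite different, and that the degree identities $\sum_i\deg(\phi_i)=n$, $\sum_i\deg(\psi_i)=m$, $\sum_i\deg(f\circ\phi_i)=mn$ are correctly invoked.

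The identity \eqref{diffexp} is then a direct application of the tower formula Lemma~\ref{tower} in the form $d_{f\circ\phi_i}(S) = e_{\phi_i}(S)\cdot d_f(\phi_i(S)) + d_{\phi_i}(S)$, which rearranges to $e_{\phi_i}(S)\cdot d_f(\phi_i(S)) = d_{f\circ\phi_i}(S) - d_{\phi_i}(S)$; subtracting $d_{\psi_i}(S)$ from both sides gives \eqref{diffexp}. This step requires both $f$ and $\phi_i$ to be separable, which is why the hypothesis strengthens to ``$f$ and $g$ separable'': with $g$ also separable, $\phi_i$ is separable by base change, and $d_{\phi_i}(S)$ is defined. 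Substituting \eqref{diffexp} into \eqref{322} immediately produces the symmetric form \eqref{323}.

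The main obstacle I anticipate is not any single computation but rather verifying the degree and separability facts for the fiber-product projections cleanly, since the fiber product may be reducible and its components must be normalized before the maps $\phi_i,\psi_i$ are well-defined morphisms of smooth curves. In particular, the claim $\sum_i\deg(f\circ\phi_i)=mn=\deg(f)\deg(g)$ requires that the total degree of the fiber product over $D$ equals $mn$, counting all components with their multiplicities; since we have normalized and taken the reduced components, I would justify this via the generic fiber, where over a general point of $D$ the fiber product has exactly $mn$ points (as $f$ and $g$ are generically unramified and the general fiber of each is reduced of the full size), and these distribute among the $C_i$ according to $\deg(f\circ\phi_i)$. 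Once these structural degree identities and the separability of each $\phi_i,\psi_i$ are in hand, the rest is a routine reorganization of Riemann--Hurwitz and tower-formula terms.
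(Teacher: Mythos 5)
Your derivations of \eqref{321}, \eqref{diffexp} and \eqref{323} match the paper's. The gap is in your route to \eqref{322}. You propose to obtain the identity
\[
n(2\g_A-2)-mn(2\g_D-2)=\sum_{i=1}^r\sum_{S\in C_i(k)}e_{\phi_i}(S)\cdot d_f(\phi_i(S))
\]
by applying Riemann--Hurwitz to $\phi_i$ and to the composite $f\circ\phi_i=g\circ\psi_i$ (or, in your alternative phrasing, via the tower formula for $d_{g\circ\psi_i}$, which involves $d_g$). Every one of these steps needs $\phi_i$, $f\circ\phi_i$, or $g$ to be separable, and hence needs $g$ to be separable: if $g$ is inseparable then $\phi_i$ can be inseparable, $d_{\phi_i}$ and $d_{f\circ\phi_i}$ are undefined, and Lemma~\ref{rh} does not apply to $\phi_i$. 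But \eqref{322} is asserted assuming only that $f$ is separable, and that extra generality is used downstream: if $g$ is inseparable then every point of $D$ is a wildly ramified branch point of $g$, so the hypothesis of \eqref{mainwild} forces $f$ to be tamely ramified, and the proof of \eqref{mainwild} in that case runs precisely through \eqref{322} with this inseparable $g$. You correctly observed that \eqref{diffexp} requires both $f$ and $g$ separable, but your derivation of \eqref{322} silently imports the same assumption.

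The paper avoids this by never touching $d_{\phi_i}$, $d_{f\circ\phi_i}$, or $d_g$ in the proof of \eqref{322}: it multiplies the Riemann--Hurwitz formula for $f\colon A\to D$ by $n$ and rewrites $n\,d_f(P)$ as $\sum_{i}\sum_{S\in\phi_i^{-1}(P)}e_{\phi_i}(S)\cdot d_f(P)$ using the Fundamental Equality (Lemma~\ref{fund}) together with $\sum_i\deg(\phi_i)=n$; this uses only $d_f$ (defined since $f$ is separable) and ramification indices (always defined), so it works for arbitrary nonconstant $g$. Relatedly, your justification of the degree identities via ``$f$ and $g$ are generically unramified'' also breaks for inseparable $g$; the reason $\sum_i\deg(\psi_i)=m$ and $\sum_i\deg(\phi_i)=n$ hold is that separability of $f$ alone makes $k(A)\otimes_{k(D)}k(B)$ reduced, hence a product of the fields $k(C_i)$, which gives the degree counts over $k(B)$ and $k(A)$. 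If you add the hypothesis that $g$ is separable, your argument is correct and reaches the same identity as the paper's, but by a less robust route that does not prove the lemma as stated.
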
 

\begin{proof}
Since $f$ is separable, also each $\psi_i$ is separable. Riemann--Hurwitz for $\psi_i$ says that
\[ 
2\g_i-2 = \deg(\psi_i) \cdot (2\g_B-2) + \sum_{S \in C_i(k)} d_{\psi_i}(S).
\]
By summing over all $i$, and using the fact that $\sum_{i=1}^r \deg(\psi_i) = m$, this yields \eqref{321}.
Riemann--Hurwitz for the map $f \colon A \to D$ says that
\begin{equation} \label{use1}
2\g_A-2 = m (2\g_D-2) + \sum_{P \in A(k)} d_f(P).
\end{equation}
For each $P \in A(k)$, Lemma~\ref{fund} yields
\begin{equation}\label{use2}
\sum_{i=1}^r \sum_{S \in \phi_i^{-1}(P)} e_{\phi_i}(S) = \sum_{i=1}^r \deg(\phi_i) = n.
\end{equation}
Multiply \eqref{use1} by $n$, and then substitute \eqref{use2}, to get
\begin{align*}
n(2\g_A-2) - mn(2\g_D-2) &= \sum_{P \in A(k)} n d_f(P) \\
&= \sum_{P \in A(k)} \sum_{i=1}^r \sum_{S \in \phi_i^{-1}(P)} e_{\phi_i}(S) \cdot d_f(P) \\
&= \sum_{i=1}^r \sum_{S\in C_i(k)} e_{\phi_i}(S) \cdot d_f(\phi_i(S)).
\end{align*}
Upon subtracting the right side of this equation from the left, and adding the difference to the right side of \eqref{321}, we obtain \eqref{322}.

If both $f$ and $g$ are separable then also each $\phi_i$ is separable, so by Lemma~\ref{tower} we have
\[
e_{\phi_i}(S)\cdot d_f(\phi_i(S)) = d_{f\circ\phi_i}(S)-d_{\phi_i}(S)
\]
for each $S\in C_i(k)$.  Subtracting $d_{\psi_i}(S)$ from both sides yields \eqref{diffexp}, and combining \eqref{322} with \eqref{diffexp} yields \eqref{323}.
\end{proof}

In light of Lemma~\ref{verywild}, in order to prove Theorem~\ref{main} we must compute the summands involving different exponents in the right sides of \eqref{321}, \eqref{322} and \eqref{323}.  We do this in the next result, under suitable hypotheses about tame ramification.

\begin{lemma} \label{local}
Pick $S\in C_i(k)$ for some $i$, and put $P:=\phi_i(S)$ and $Q:=\psi_i(S)$.  If $P$ is tamely ramified under $f$ then
\begin{equation}\label{371}
d_{\psi_i}(S) = \frac{e_f(P)}{\gcd\bigl(e_f(P),e_g(Q)\bigr)} - 1
\end{equation}
and
\begin{equation}\label{372}
\begin{aligned}
&\gcd\bigl(e_f(P),e_g(Q)\bigr) \cdot \Bigl( e_{\phi_i}(S) \cdot d_f(P) - d_{\psi_i}(S)  \Bigr) \\
&\quad =  (e_f(P)-1) \cdot (e_g(Q)-1) + \gcd\bigl(e_f(P),e_g(Q)\bigr) - 1.
\end{aligned}
\end{equation}
%
%
\end{lemma}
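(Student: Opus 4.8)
The plan is to establish both displayed equations by a purely local analysis at the point $S\in C_i(k)$, using Abhyankar's lemma (Lemma~\ref{abh}) to pin down the ramification indices of the induced maps, and then feeding these into the relations from Lemma~\ref{d} and Lemma~\ref{tower}. Write $a:=e_f(P)$, $b:=e_g(Q)$, and $d:=\gcd(a,b)$; since $P$ is tamely ramified under $f$ we have $p\nmid a$, hence $p\nmid d$, which will let us apply Lemma~\ref{d} and the tame case of Dedekind's different theorem freely at the relevant spots.

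\smallskip

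First I would compute $e_{\phi_i}(S)$ and $e_{\psi_i}(S)$. By Abhyankar's lemma the composite $f\circ\phi_i$ has $e_{f\circ\phi_i}(S)=\lcm(a,b)=ab/d$. Combining this with the multiplicativity of ramification indices in towers (Lemma~\ref{tower}), namely $e_{f\circ\phi_i}(S)=e_{\phi_i}(S)\cdot e_f(P)$ and the analogous identity $e_{g\circ\psi_i}(S)=e_{\psi_i}(S)\cdot e_g(Q)$ together with the fibered-product equality $f\circ\phi_i=g\circ\psi_i$, I would read off
\[
e_{\phi_i}(S)=\frac{\lcm(a,b)}{a}=\frac{b}{d},\qquad e_{\psi_i}(S)=\frac{\lcm(a,b)}{b}=\frac{a}{d}.
\]
Because $P$ is tamely ramified under $f$, the point $S$ is tamely ramified under $\phi_i$ (its index $b/d$ divides $b$, but more to the point one argues tameness from $p\nmid a$ forcing $p\nmid e_{\phi_i}(S)$ is \emph{not} automatic, so I would instead use Abhyankar tameness directly), giving $d_{\phi_i}(S)=e_{\phi_i}(S)-1=b/d-1$ by Lemma~\ref{d}. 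Now $\psi_i$ has $e_{\psi_i}(S)=a/d$, and since $p\nmid a$ we get $p\nmid(a/d)$, so $S$ is tamely ramified under $\psi_i$ as well, whence $d_{\psi_i}(S)=e_{\psi_i}(S)-1=a/d-1=a/\gcd(a,b)-1$. This is exactly \eqref{371}.

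\smallskip

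For \eqref{372} I would compute the left-hand factor $e_{\phi_i}(S)\cdot d_f(P)-d_{\psi_i}(S)$ explicitly. Since $P$ is tamely ramified under $f$, Lemma~\ref{d} gives $d_f(P)=a-1$, so $e_{\phi_i}(S)\cdot d_f(P)=(b/d)(a-1)$, and subtracting $d_{\psi_i}(S)=a/d-1$ yields
\[
e_{\phi_i}(S)\cdot d_f(P)-d_{\psi_i}(S)=\frac{b(a-1)}{d}-\frac{a}{d}+1=\frac{ab-a-b+d}{d}.
\]
Multiplying through by $d=\gcd(a,b)$ gives $ab-a-b+d=(a-1)(b-1)+d-1$, which is the right-hand side of \eqref{372}. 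Rewriting $a,b,d$ back as $e_f(P),e_g(Q),\gcd(e_f(P),e_g(Q))$ completes the proof.

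\smallskip

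The main obstacle is the tameness bookkeeping in the second paragraph: the hypothesis only asserts that $P$ is tamely ramified under $f$, and I must be careful to deduce tameness of $S$ under $\phi_i$ and under $\psi_i$ where it is needed, since Lemma~\ref{d} requires tameness before I may replace a different exponent by ``index minus one.'' The clean route is to note that $p\nmid e_f(P)$ forces $p\nmid\lcm(e_f(P),e_g(Q))=e_{f\circ\phi_i}(S)$, so by Lemma~\ref{tower} every factor in $e_{f\circ\phi_i}(S)=e_{\phi_i}(S)\cdot e_f(P)$ is prime to $p$; in particular $p\nmid e_{\phi_i}(S)$ and, using $e_{g\circ\psi_i}(S)=e_{f\circ\phi_i}(S)$, also $p\nmid e_{\psi_i}(S)$. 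Thus both $S/\phi_i$ and $S/\psi_i$ are tamely ramified, and the different-exponent substitutions are justified. One further subtlety worth a sentence is that $d_f(P)=a-1$ itself requires $P$ tamely ramified under $f$, which is precisely the hypothesis, so no additional input is needed there.
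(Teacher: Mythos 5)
Your derivation of \eqref{371} and \eqref{372} follows the same route as the paper: Abhyankar's lemma (Lemma~\ref{abh}) gives $e_{f\circ\phi_i}(S)=\lcm\bigl(e_f(P),e_g(Q)\bigr)$, the tower formula (Lemma~\ref{tower}) together with $f\circ\phi_i=g\circ\psi_i$ gives $e_{\phi_i}(S)=e_g(Q)/d$ and $e_{\psi_i}(S)=e_f(P)/d$ where $d:=\gcd\bigl(e_f(P),e_g(Q)\bigr)$, tameness of $S$ under $\psi_i$ follows from $p\nmid e_f(P)$ (hence $p\nmid e_f(P)/d$), and Lemma~\ref{d} then converts $d_{\psi_i}(S)$ and $d_f(P)$ into $e_{\psi_i}(S)-1$ and $e_f(P)-1$; the closing arithmetic for \eqref{372} is the same as the paper's. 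So the core argument is correct and matches the paper.

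However, your final paragraph contains a false assertion that you should delete: $p\nmid e_f(P)$ does \emph{not} force $p\nmid\lcm\bigl(e_f(P),e_g(Q)\bigr)$. If $p\mid e_g(Q)$ then $p\mid\lcm\bigl(e_f(P),e_g(Q)\bigr)$, and correspondingly $e_{\phi_i}(S)=e_g(Q)/d$ may well be divisible by $p$, so $S$ need not be tamely ramified under $\phi_i$; the identity $d_{\phi_i}(S)=e_{\phi_i}(S)-1$ asserted in your second paragraph is therefore unjustified and false in general. Fortunately neither \eqref{371} nor \eqref{372} involves $d_{\phi_i}(S)$: the only different exponents actually needed are $d_{\psi_i}(S)$ (handled by $p\nmid e_{\psi_i}(S)=e_f(P)/d$, as you correctly argue earlier) and $d_f(P)$ (handled directly by the hypothesis that $P$ is tamely ramified under $f$), while $\phi_i$ enters only through its ramification index $e_{\phi_i}(S)$, for which no tameness is required. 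With the spurious claims about tameness under $\phi_i$ removed, your proof is exactly the paper's.
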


\begin{proof}
By Lemma~\ref{abh} we have
\[
e_{f \circ \phi_i}(S) = \lcm\bigl(e_f(P),e_g(Q)\bigr) = \frac{e_f(P) \cdot e_g(Q)}{\gcd\bigl(e_f(P),e_g(Q)\bigr)}.
\]
Lemma~\ref{tower} yields
\[
e_{\phi_i}(S) \cdot e_f(P) = e_{f \circ \phi_i}(S) =  e_{g \circ \psi_i}(S) = e_{\psi_i}(S) \cdot e_g(Q), 
\]
so that
\[
e_{\phi_i}(S) = \frac{e_g(Q)}{\gcd\bigl(e_f(P),e_g(Q)\bigr)} \quad\text{ and }\quad e_{\psi_i}(S) = \frac{e_f(P)}{\gcd\bigl(e_f(P),e_g(Q)\bigr)}.
\]
In particular, $S$ is tamely ramified under $\psi_i$, so Lemma~\ref{d} implies that
\[
d_{\psi_i}(S) = e_{\psi_i}(S) - 1 = \frac{e_f(P)}{\gcd\bigl(e_f(P),e_g(Q)\bigr)} - 1,
\]
which is \eqref{371}.  Lemma~\ref{d} also gives $d_f(P)=e_f(P)-1$, so that
\begin{align*}
&\gcd\bigl(e_f(P),e_g(Q)\bigr)\cdot\Bigl(e_{\phi_i}(S)\cdot d_f(P)-d_{\psi_i}(S)\Bigr) \\
&\quad = e_g(Q)\cdot(e_f(P)-1) - \Bigl(e_f(P)-\gcd\bigl(e_f(P),e_g(Q)\bigr)\Bigr) \\
&\quad = (e_g(Q)-1)\cdot (e_f(P)-1) + \gcd\bigl(e_f(P),e_g(Q)\bigr) - 1,
\end{align*}
which is \eqref{372}.
\end{proof}

We now prove Theorem~\ref{main}.

\begin{proof}[Proof of Theorem~\ref{main}]
If $f$ is tamely ramified then in particular $f$ is separable, so that \eqref{321} yields
\[
\sum_{i=1}^r (2\g_i-2) = m (2\g_B-2) +  \sum_{i=1}^r \sum_{S \in C_i(k)} d_{\psi_i}(S).
\]
By \eqref{371} the right side equals
\begin{align*}
&m (2\g_B-2) +  \sum_{i=1}^r \sum_{S \in C_i(k)} \Bigl(\frac{e_f(\phi_i(S))}{\gcd\bigl(e_f(\phi_i(S)),e_g(\psi_i(S))\bigr)} - 1\Bigr)\\
&\quad = m (2\g_B-2) + \sum_{R\in D(k)}\sum_{\substack{P\in f^{-1}(R) \\ Q\in g^{-1}(R)}} \Bigl(\frac{e_f(P)}{\gcd\bigl(e_f(P),e_g(Q)\bigr)}-1\Bigr)\cdot N_{P,Q}
\end{align*}
where
\[
N_{P,Q}:=\sum_{i=1}^r\abs{\phi_i^{-1}(P)\cap\psi_i^{-1}(Q)}
\]
for all $P\in A(k)$ and $Q\in B(k)$ satisfying $f(P)=g(Q)$.
Substituting the value $N_{P,Q}=\gcd\bigl(e_f(P),e_g(Q)\bigr)$ from Lemma~\ref{df} yields \eqref{maintame}. 

Likewise, if $g$ is tamely ramified then the combination of \eqref{322}, \eqref{372} and Lemma~\ref{df} yields the conclusion of \eqref{mainwild}.  Since the conclusion of \eqref{mainwild} is symmetric in $f$ and $g$, it follows that this conclusion is also true if $g$ is tamely ramified.

It remains to prove \eqref{mainwild} when neither $f$ nor $g$ is tamely ramified.
Thus $f$ has a wildly ramified branch point, which by the hypothesis of \eqref{mainwild} is not a wildly ramified branch point of $g$, so that $g$ is separable.  Likewise $f$ is separable.  For any $i$ and any $S\in C_i(k)$, put $P:=\phi_i(S)$ and $Q:=\psi_i(S)$.  Since $f(P)=g(Q)$ but $f$ and $g$ have no common wildly ramified branch points, at least one of $e_f(P)$ and $e_g(Q)$ is not divisible by $p$.  If $p\nmid e_f(P)$ then \eqref{diffexp} and \eqref{372} yield
\begin{align*}
&d_{f \circ \phi_i}(S)-d_{\phi_i}(S)-d_{\psi_i}(S)
= e_{\phi_i}(S)\cdot d_f(P)-d_{\psi_i}(S) \\
&\qquad
=\frac{(e_f(P)-1)\cdot(e_g(Q)-1)+\gcd\bigl(e_f(P),e_g(Q)\bigr)-1}{\gcd\bigl(e_f(P),e_g(Q)\bigr)}.
\end{align*}
If $p\nmid e_g(Q)$ then the same argument yields the same conclusion.  Thus the above equation is true in any case.  Combining it with 
\eqref{323} and Lemma~\ref{df} yields \eqref{mainwild}.
\end{proof}



\end{document}